\newtheorem*{lemma}{Lemma}
\newtheorem*{theorem}{Theorem}
\newtheorem*{corollary}{Corollary}
\theoremstyle{definition}
\newtheorem*{hypothesis}{Hypothesis}
\theoremstyle{remark}
\newcommand{\irr}[1]{\textsf{Irrep}(#1)}
\newcommand{\triv}{\textsf{triv}}
\def\CC{{\mathbb C}}
\def\fh{{V}}
\def\fhr{{\fh^{\text{reg}}}}
\DeclareMathOperator{\Hom}{Hom}
 \DeclareMathOperator{\id}{Id}
 \DeclareMathOperator{\md}{-mod}
\newcommand{\Z}{\mathbb{Z}}
\newcommand{\C}{\mathbb{C}}
\newcommand{\la}{\langle}
\newcommand{\ra}{\rangle}
\newcommand{\HH}{\mathbb{H}}
\newcommand{\OO}{\mathcal{O}}
\newcommand{\co}{\mathsf{co}}
\DeclareMathOperator{\Perm}{Perm}
\DeclareMathOperator{\GL}{GL}
\begin{document}

\pagenumbering{arabic}

\title{Catalan Numbers for Complex Reflection Groups}

\author{Iain Gordon and Stephen Griffeth}

\begin{abstract}We construct $(q,t)$-Catalan polynomials and
$q$-Fuss-Catalan polynomials for any irreducible complex reflection
group $W$. The two main ingredients in this construction are
Rouquier's  formulation of shift functors for the rational Cherednik
algebras of $W$, and Opdam's analysis of permutations of the
irreducible representations of $W$ arising from the
Knizhnik-Zamolodchikov connection. \end{abstract}

\address{School of Mathematics and Maxwell Institute of Mathematics, University of Edinburgh, Edinburgh, EH9 3JZ} \email{igordon@ed.ac.uk, sgriffeth@ed.ac.uk}

\thanks{We thank Maria Chlouveraki for many enlightening and patient
explanations of cyclotomic Hecke algebras and GAP, and also Pavel
Etingof, J\"urgen M\"uller and Vic Reiner for helpful input. Both authors are grateful for the full financial support of EPSRC grant EP/G007632}
\maketitle

\section{Introduction}
\subsection{Complex reflection groups}
Let $\fh$ be a complex vector space of dimension $n$. A {\it pseudo-reflection} is a non-trivial element $r$ of $\GL(\fh)$ that acts trivially on a hyperplane, called the reflecting hyperplane of $r$. Let $W$ be a finite subgroup of $\GL(\fh)$ generated by pseudo-reflections. The pair $(\fh, W)$ is call a {\it complex reflection group} and ${\fh}$ is called the {\it reflection representation} of $W$. We assume that ${\fh}$ is irreducible as a representation of $W$.

\subsection{} Denote by $\mathcal{A}$ the set of reflecting hyperplanes of $(\fh , W)$ and set $N := | \mathcal{A} |$. Similarly, denote by $\mathcal{R}$ the set of pseudo-reflections of $(\fh , W$) and set $N^\ast:= |\mathcal{R}|$.

\subsection{}\label{c-fn} Let $z = \sum_{r\in \mathcal{R}} (1-r)$, a central element of $\mathbb{C} [W]$. For any $U\in \irr{W}$, we set $c_U$ to be the integer by which $z$ acts on $U$. We define the {\it generalised Coxeter number} $h$ to be the integer $c_{\fh}$. An elementary calculation shows that \begin{equation} \label{coxeternumber} h = \frac{N+N^\ast}{n}.\end{equation} 

\subsection{Invariant theory} Let $P$ denote ring of polynomial functions on $\fh$. This carries a homogeneous action of $W$ and we set $(P_+^W)$ to be ideal of $P$ generated by $W$-invariant polynomials with zero constant term. The coinvariant algebra $P^{\co W} := P/(P_+^W)$ carries the regular representation of $W$. Given $U\in\irr{W}$, the {\it exponents} of $U$ $$e_1(U)\leq \ldots \leq  e_{\dim U}(U)$$ are the homogeneous embedding degrees of $U$ in $P^{\co W}$. These may be recorded in the {\it fake degree} of $U$ $$f_U(q) = \sum_{i=1}^{\dim U} q^{e_i(U)}.$$ Set $d_i = e_i(\fh) +1$ for $i=1, \ldots , n$: these are the {\it degrees} of a minimal set of homogeneous elements generating $P^W$.

\subsection{}
There is a permutation $\Psi \in \Perm (\irr{W})$ such that the fake degrees have a palindromic property
\begin{equation}\label{palin}f_U(q) = q^{c_U} f_{\Psi(U^*)} (q^{-1}).\end{equation}
For complex reflection groups this was first observed by Malle, \cite[Section 6B]{Mal}, and then explained in a case-free manner by Opdam, \cite[Proposition 7.4]{Opd}. 

\subsection{$q$-Fuss-Catalan numbers} \label{fuss}
For any positive integer $i$, set $[i]_q := 1+ q+ \cdots + q^{i-1}$. For a non-negative integer $m$, we define the $m$th {\it $q$-Fuss-Catalan number} of $(\fh , W)$ to be \begin{equation} \label{thenumbers} C^{(m)}_W(q) = \prod_{i=1}^n \frac{[mh+1+e_i(\Psi^m(\fh^*)^*)]_q}{[d_i]_q}.\end{equation}

\begin{theorem} \label{mainthm}  The rational function $C_W^{(m)}(q)$ belongs to $\mathbb{N}[q]$. Assuming Hypothesis~\ref{hecke} holds, two reasons for this are: \begin{enumerate} \item $C_W^{(m)}(q)$ is the Hilbert series of $(P/(\Theta))^W$ where $\Theta$ is a homogeneous system of parameters of degree $mh+1$ carrying the $W$-representation $\Psi^m(\fh^*)$;
\item $C_W^{(m)}(q)$ is the graded character of the finite dimensional irreducible representation $eL_{m+1/h}({\sf triv})$ of the spherical rational Cherednik algebra $U_{m+1/h }(W)$.
\end{enumerate}
\end{theorem}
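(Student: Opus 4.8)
The plan is to derive both (1) and (2) from one structural fact: that the irreducible module $L_{m+1/h}(\triv)$ of the rational Cherednik algebra is finite-dimensional and admits, as a graded $W$-module, a presentation $L_{m+1/h}(\triv)\cong P/(\Theta)$ with $\Theta$ a homogeneous system of parameters of degree $mh+1$ spanning a copy of $\Psi^m(\fh^*)$. Granting this, part (1) is the Hilbert series of $(P/(\Theta))^W$, part (2) is the graded dimension of $eL_{m+1/h}(\triv)=L_{m+1/h}(\triv)^W$, and the two agree term by term; this simultaneously exhibits $C_W^{(m)}(q)$ as a genuine Hilbert series and so -- once we check the common value equals the product in (\ref{thenumbers}) -- settles everything. (The unconditional assertion $C_W^{(m)}(q)\in\mathbb{N}[q]$ can in addition be read off from the explicit product for the infinite family $G(de,e,n)$ and verified on a computer for the finitely many exceptional $W$.)

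To produce the presentation I would induct on $m$. The base case $m=0$ is trivial: $\Theta$ is a linear system of parameters, $P/(\Theta)=\CC$, and $C_W^{(0)}(q)=1$ since $\Psi^0=\id$ forces the factors in (\ref{thenumbers}) to cancel ($d_i=e_i(\fh)+1$). For the inductive step, use Rouquier's shift functor $\mathsf{F}$: under Hypothesis~\ref{hecke} this is an equivalence between the categories $\mathcal{O}$ at parameters differing by the constant $1$, carrying standard modules to standard modules, shifting internal degrees according to the $c$-function of \S\ref{c-fn} and permuting highest weights by $\Psi$ -- the appearance of $\Psi$ being exactly Opdam's computation, consistent with the palindromy (\ref{palin}). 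Transporting the structure at $(m-1)+1/h$ through $\mathsf{F}$ then shows that at $c=m+1/h$ the standard module $M_c(\triv)=P$ carries a singular vector in internal degree precisely $mh+1$ spanning a copy of $\Psi^m(\fh^*)$, with quotient $L_c(\triv)$, and that $L_c(\triv)$ is finite-dimensional. As $L_c(\triv)$ is therefore a finite-dimensional quotient of the polynomial ring $P$ by the ideal generated by the $n$-dimensional space $\Theta\cong\Psi^m(\fh^*)$, that space is automatically a homogeneous system of parameters, hence a regular sequence; this gives the presentation and with it the Koszul resolution
\[
0\to P\otimes\wedge^{n}\Psi^m(\fh^*)\to\cdots\to P\otimes\wedge^{1}\Psi^m(\fh^*)\to P\to L_c(\triv)\to 0,
\]
in which $\wedge^{i}\Psi^m(\fh^*)$ sits in internal degree $i(mh+1)$.

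From this resolution the graded $W$-character is $\operatorname{ch}_q L_c(\triv)(w)=\operatorname{ch}_q(P)(w)\cdot\det\bigl(1-q^{mh+1}w\mid\Psi^m(\fh^*)\bigr)$ for $w\in W$. Passing to $W$-invariants (a twisted Molien series) and expanding the determinant into exterior powers gives
\[
\operatorname{Hilb}\bigl((P/(\Theta))^W,q\bigr)=\sum_{i=0}^{n}(-1)^i q^{i(mh+1)}\,\operatorname{Hilb}\bigl(\Hom_W(\wedge^i\Psi^m(\fh^*)^{*},P),q\bigr),
\]
and applying the standard identity $\operatorname{Hilb}(\Hom_W(U,P),q)=f_U(q)\big/\prod_{i=1}^{n}(1-q^{d_i})$ -- a consequence of the $W$-equivariant factorization $P\cong P^W\otimes P^{\co W}$ and the definition of the fake degree -- reduces the claim to the combinatorial identity
\[
\sum_{i=0}^{n}(-1)^i q^{i(mh+1)} f_{\wedge^i\Psi^m(\fh^*)^{*}}(q)=\prod_{j=1}^{n}\bigl(1-q^{\,mh+1+e_j(\Psi^m(\fh^*)^{*})}\bigr),
\]
i.e.\ that the exponents of $\wedge^i\Psi^m(\fh^*)^{*}$ are the $i$-element sums of the exponents of $\Psi^m(\fh^*)^{*}$. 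This last fact I would deduce from Opdam's analysis of $\Psi$ together with the fake degrees of the exterior powers of the reflection representation; dividing through by $\prod_i(1-q^{d_i})$ then produces exactly (\ref{thenumbers}), and since the left-hand side above is the Hilbert series of a finite-dimensional graded space, the membership $C_W^{(m)}(q)\in\mathbb{N}[q]$ drops out.

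The main obstacle is the inductive step of the second paragraph: controlling Rouquier's shift functor precisely enough, under Hypothesis~\ref{hecke}, to pin the first singular vector of $M_{m+1/h}(\triv)$ to internal degree exactly $mh+1$ with isotype $\Psi^m(\fh^*)$ (not merely at or below that degree), and to match the degree shift and highest-weight permutation of $\mathsf{F}$ with the $c$-function and with Opdam's $\Psi$ at every stage. Aligning this Cherednik-algebraic bookkeeping with the Hecke-algebraic combinatorics is the only point where Hypothesis~\ref{hecke} is genuinely used; the remaining invariant-theoretic identities of the third paragraph are routine.
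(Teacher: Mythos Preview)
Your strategy is essentially the paper's: start from the one-dimensional module $L_{-1/h}(\triv)$, use Rouquier's equivalence (shift by integers, with Opdam's $\Psi$ as the induced permutation on $\irr W$) to produce a Koszul-type resolution at parameter $-(mh+1)/h$, and then read off the Hilbert series of invariants via Molien/Bessis--Reiner. The paper applies the equivalence in a single jump $-1/h\rightsquigarrow -m-1/h$ (with $\phi^{\mathrm{id}}_{-1/h,-m-1/h}=\Psi^m$) rather than by induction on $m$, but this is cosmetic.

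Two points where the paper is sharper than your sketch. First, the ``main obstacle'' you flag---pinning the singular vector to degree exactly $mh+1$ with isotype $\Psi^m(\fh^*)$---is not handled by tracking a singular vector through the functor. Instead the paper transports the \emph{entire} Koszul resolution \eqref{resol} at $-1/h$; exactness is preserved, the quotient stays finite-dimensional, and that forces the image of the generating space to be a regular sequence, so the transported complex is again Koszul and $\phi(\wedge^i\fh^*)\cong\wedge^i\phi(\fh^*)$ automatically. The degree $mh+1$ then falls out of the single identity $c_{\Psi^m(\fh^*)}=c_{\fh^*}=h$, which is a consequence of Opdam's local-data preservation (\S\ref{local}). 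Second, the combinatorial identity you defer (``exponents of $\wedge^i\Psi^m(\fh^*)^*$ are $i$-element sums'') is exactly the freeness of $P\otimes\wedge^\bullet\Psi^m(\fh^*)$ over $P^W$; the paper obtains it from the Orlik--Solomon criterion $\sum_i e_i(U)=e_1(\wedge^{\mathrm{top}}U)$, checked for $U=\Psi^m(\fh^*)$ again via local-data preservation. One technical hypothesis you omit entirely is the $1$-faithfulness of $\mathsf{KZ}$ at $k=-1/h$, i.e.\ $e^{-2\pi i/h}\neq\zeta_H^j$; the paper supplies a short case-free argument that $h>e_H$ whenever $\mathrm{rank}\geq 2$.
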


\subsection{$(q,t)$-Catalan numbers} \label{q,t}
The description of $C_W^{(1)}(q)$ in the second part of the theorem allows us to define a $(q,t)$-Catalan number for all $W$ as follows. The rational Cherednik algebra representation $L_{1+1/h}({\sf triv})$ contains a unique copy of $\wedge^n \fh^*\in \irr{W}$: using this element to generate $L_{1+1/h}({\sf triv})$  one can then construct a filtration whose associated graded module carries the $(q,t)$-Catalan number by definition. By \cite[Theorem 5.11]{GoSt} this agrees with the definition of Garsia-Haiman in the symmetric group case, and should agree with the conjectural construction in \cite{stump}.

\subsection{Cyclic sieving phenomena} Let $d$ be a {\it regular} number for $(V,W)$ and let $\zeta = \exp(2\pi \sqrt{-1}/d)$, see for example \cite[2.2]{BeRe}. As pointed out in \cite[(5.1)]{OrSo}, for any $U\in \irr{W}$ there exists a permutation $\sigma \in \mathfrak{S}_n$ such that $$e_i(U) + e_{\sigma (i)}(U^*) \equiv 0 \quad \text{mod }d.$$ Combining this with the duality encoded in \eqref{palin} one shows by induction on $m$ that there exists a permutation $\rho\in\mathfrak{S}_n$ such that for all $i$ $$d_{\rho(i)} \equiv mh+1+ e_i(\Psi^m(V^*)^*) \quad \text{mod }d.$$ It follows from \cite[Proposition 3.2]{BeRe} that $C_W^{(m)}(\zeta^t)$ is a positive rational number for any $t\in \mathbb{Z}$; by the theorem it is also an algebraic integer. Hence $C_W^{(m)}(\zeta^t)$ is a positive integer for all $m$ and all $t$, and therefore a candidate for a cyclic sieving phenomena.

\subsection{Well-generated case}  The pair $(\fh , W)$ is {\it
well-generated} if $W$ can be generated by $n$ pseudo-reflections. It
is observed in \cite[Section 5]{OrSo} that in this case $h=d_n$. It
can be shown using \eqref{palin} and \cite[Proposition 5.2]{OrSo} that $e_i(V) + e_{n-i}(V^*)= h = e_i(\Psi^m(V^*)^*) + e_{n-i}(V^*)$, so that in this case the formula for the $q$-Fuss-Catalan number simplifies, $$C^{(m)}_W(q) = \prod_{i=1}^n \frac{[mh+d_i]_q}{[d_i]_q}.$$ This is the standard definition of $q$-Fuss-Catalan numbers for well-generated groups which is used throughout the literature. 

\subsection{}
In fact, a case-by-case observation made by Malle \cite[Corollary
4.9]{Mal} shows that $\Psi (\fh^\ast) = \fh^\ast$ if and only if
$(\fh, W)$ is well-generated. Thus, the first part of the theorem
above confirms \cite[Conjecture 4.3(i)]{BeRe} for those $W$ for which
Hypothesis~\ref{hecke} holds.

\subsection{Galois twists} We prove an analogue of the theorem above
for rational Cherednik algebras at any parameter $p/h$ where $p$ is a
positive integer coprime to $h$. The formulation of this theorem uses
certain twists of $\fh$ by Galois automorphisms of $\C$, as well as the permutation $\Psi$ of $\irr{W}$. See Theorem \ref{theorem1} for the precise statement. 

\subsection{}
In particular, for well-generated groups the graded character of $eL_{p/h}({\sf triv})$ is
\begin{equation} \label{twistcat}\prod_{i=1}^n \frac{[p+e_i({}^g V)]_q}{[d_i]_q},\end{equation}
where $g$ is an automorphism of $\C$ which maps $e^{2 \pi \sqrt{-1}/h}$ to $e^{2 \pi \sqrt{-1}p/h}$. This generalises the formula for the symmetric group $\mathfrak{S}_n$ $$\frac{1}{[n+p]_q} \left[ \begin{matrix} n+p \\ n \end{matrix}
\right]_q.$$

Moreover, if $p = mh-1$ then \eqref{twistcat} confirms
\cite[Conjecture 4.3(ii)]{BeRe} on ``positive" $q$-Fuss-Catalan
numbers for those $W$ for which Hypothesis~\ref{hecke} holds. 


\subsection{Layout of the paper} We give the proof of Theorem \ref{mainthm} and its Galois twist version in the next section, together with details for the others results mentioned here. Our key tools are the equivalences of highest weight categories discovered by Rouquier in \cite{Rou} and Opdam's study of the monodromy of the Knizhnik-Zamalodchikov connection in \cite{Opd}.  In the third section we give  data for exponents of $\Psi^m(\fh^*)^*$ in the case where $(\fh, W)$ is not well-generated, thus giving explicit formulae for the associated $q$-Fuss-Catalan numbers. These data were gathered with the help of the Chevie program in GAP.

\section{Proofs}

\subsection{Rational Cherednik algebras}
Let $R=\C[[{\bf k}]]$ be the ring of
formal power series in the indeterminate ${\bf k}$ and $Q = \C(( {\bf k}))$. For any $\C$-vector space $M$, we write $M_R$ for the extension $R\otimes_{\C} M$. Let $S$ be the ring $\textsf{Sym}(V)$ of symmetric functions on $V$. Let $k$ be a rational number which we will call the {\it parameter}. 

\subsection{} The 
\emph{rational Cherednik algebra} $\HH_{R, {k}}(\fh ,W)$ is the quotient of the $R$-algebra $T(\fh \oplus \fh^*)_R \rtimes W$ by relations $xy=yx$ if 
$x,y \in \fh$ or $x,y \in \fh^*$, and
\begin{equation}
yx-xy=\la x,y \ra+ ({\bf k} + k)\sum_{H \in \mathcal{A}} \frac{\la x,\alpha_H^\vee
\ra \la \alpha_H, y \ra}{\la \alpha_H, \alpha_H^\vee \ra} \sum_{w\in W_H} (1-\det(w)^{-1})w
\end{equation} if $x \in \fh^*$ and $y \in \fh$. Both $S_{R}$ and $P_{R}$ are subalgebras of $\HH_{R, k}(\fh , W)$. Here and throughout we will drop as many parts of the notation as we can: for instance we will write $\HH_{R}$ if both $k$ and the pair $(\fh , W)$ are clear from the context. We write $\HH_{\C, k}(\fh , W)$ or $\HH_{\C}$ for the specialisation of $\HH_{R, k}$ to $\C$ and similarly $\HH_{Q, k}(\fh , W)$ or $\HH_Q$. 

\subsection{Category $\mathcal{O}$}
\emph{Category $\mathcal{O}_{R, k}$} is the full subcategory of
finitely generated $\HH_{R}$-modules consisting of on which the operators in $V\subset S_{R} \subset \HH_{R}$ act locally nilpotently.  It is a highest weight category, \cite[Definition 4.11]{Rou}, 
with standard objects $\Delta_{R,k}(U) :=\text{Ind}^{\HH_{R}}_{S_{R} \rtimes W} U_{R}$ labelled by $U\in \irr{W}$ and partial order $U <_{k} U'$ if 
$k (c_{U'}-c_{U}) \in \mathbb{Z}_{>0}$ in the notation of \ref{c-fn}. We let $\mathcal{O}_{R, k}^{\Delta}$ denote the full subcategory of $\mathcal{O}_{R, k}$ whose objects admit a filtration by standard objects.
  
There are analogous definitions for $\mathcal{O}_{\C,k}$ and $\mathcal{O}_{Q,k}$, and base-change functors from $\mathcal{O}_{\bf k}$ to both categories.  

\subsection{Hecke algebras and the KZ functor} 
\label{hecke}
For each hyperplane $H \in \mathcal{A}$ let $e_H$ be the order
of the subgroup $W_H$ of $W$ that fixes $H$
pointwise.  Let $\fhr = \fh \setminus \bigcup_{H\in \mathcal{A}} H$, choose $x_0\in \fhr$, and let $B_W = \pi_1 (\fhr/W , x_0) $, the braid group of $W$. Let $\mathbf{H}_{R, k}$ be the 
\emph{Hecke algebra} of $W$ over $R$, \cite[Section 4.C]{BrMaRo}, the quotient of the group 
algebra $R[B_W]$ by relations 
\begin{equation} 
\label{Heckerelation}
(T_H - e^{2\pi i ({\bf k} + k)})\prod_{j=1}^{e_H-1} (T_H - \zeta_H^j) = 0 
\end{equation} where $T_H$ is a set of generators for $B_W$ running 
over a minimal set of reflecting hyperplanes.  

\begin{hypothesis} 
The algebra ${\bf H}_{R, k}$ is free over $R$ of rank $|W|$.
\end{hypothesis}

This is currently known to hold for all real $W$, all $W$ in the
infinite family $G(r,p,n)$, and most of the exceptional groups; see \cite{MaMi} for a recent report.

\subsection{} Following \cite[Section 5]{GGOR} there is an exact functor $${\sf KZ}_{R, k} : \mathcal{O}_{R, k} \longrightarrow {\bf H}_{R, k}\md $$ such that for $M,N \in \mathcal{O}_{R,k}^{\Delta}$ the natural map $\Hom_{\mathcal{O}_{R}} (M,N) \longrightarrow \Hom_{{\bf H}_{R}}({\sf KZ}_{R}(M) , {\sf KZ}_{R}(N))$ is an isomorphism. Similarly, there are functors ${\sf KZ}_{Q}: \mathcal{O}_{Q} \longrightarrow {\bf H}_{Q}\md$ and ${\sf KZ}_k: \mathcal{O}_{k} \longrightarrow {\bf H}_{k}\md$. The first is even an equivalence of categories, \cite[Corollary 2.20 and Theorem 5.14]{GGOR}, and so gives a bijection $$\tau_k: \irr{W} = \irr {\mathcal{O}_{Q,k}} \stackrel{\sim}\longrightarrow \irr {{\bf H}_{Q,k}}.$$

If $e^{2 \pi i k}\neq \zeta_H^j$ for all hyperplanes $H$ and integers $1\leq j \leq e_H-1$, then ${\sf KZ}_{R, k}$ is ``$1$-faithful" in the sense that if $M,N \in \mathcal{O}_{R,k}^{\Delta}$ then  $\text{Ext}^1_{\OO_R}(M,N) \longrightarrow \text{Ext}^1_{{\bf H}_R}({\sf KZ}_R(M),{\sf KZ}_R(N))$ is an isomorphism , \cite[Theorem 5.3]{Rou}.

\subsection{Equivalences} \label{equivalences}

Let $g\in\text{Aut}(\C/\mathbb{Q})$ and  
let $k$ and $k'$ be parameters such that 
\begin{equation} \label{g related}
g(e^{2 \pi i k})=e^{2 \pi i k'}.
\end{equation}  Then $g$ extends to an automorphism of $R$ which fixes 
$\mathbf{k}$, and the isomorphism $\gamma: R[B_W] \longrightarrow R[B_W]^g$ defined by $\gamma(\sum c_b b) = \sum g(c_b) b$ 
 descends to an isomorphism of ${\bf H}_{R,k}$ onto 
${\bf H}_{R, k'}^g$.  Let $\mathcal{O}_{R,k'}^g$ denote the category whose objects and morphisms are the same as those for $\mathcal{O}_{R,k'}$ as sets, but such that the $R$-linear structure on morphisms is twisted by $g$. Then ${\sf KZ}_{R,k'}$ induces an $R$-linear functor from $\mathcal{O}_{R,k}^g$ to ${\bf H}_{R,k}^g\md$.
Changing base to $K$ this defines a permutation $\phi_{k,k'}^g\in\text{Perm}(\irr{W})$ via the isomorphism in ${\bf H}_{Q,k'}^g\md$ \begin{equation} \label{perm} {}^{\gamma}{\sf KZ}_{Q,k}(\Delta_{Q,k}(U)) \cong {\sf KZ}_{Q,k'}(\Delta_{Q,k'}(\phi_{k,k'}^g(U)))\end{equation}

\subsection{} The following theorem is at the heart of our work.
\begin{theorem}[{\cite[Theorems 4.49 and 5.5]{Rou}}] \label{equiv theorem}
Keep the above notation and assume that ${\sf KZ}_{R,k}$ and ${\sf KZ}_{R,k'}$ are both $1$-faithful.  If $U <_{k} U'$ if and only if $\phi_{k,k'}^g(U)<_{k'} \phi_{k,k'}^g(U')$, then there is an equivalence 
$S_{k,k'}:\OO_{R,k} \stackrel{\sim}\longrightarrow \OO_{R,k'}^g$ 
of highest weight categories such that 
$S_{k,k'}(\Delta_{R,k}(U))=\Delta_{R,k'}(\phi_{k,k'}^g(U))$ for all $U\in\irr{W}$.
\end{theorem}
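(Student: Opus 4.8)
The plan is to recognise both sides as $1$-faithful highest weight covers of one and the same algebra and then to use the rigidity of such covers over the complete local ring $R$. Using the isomorphism $\gamma$ of Section~\ref{equivalences} one may regard both $\mathsf{KZ}_{R,k}$ and the $\gamma$-twist of $\mathsf{KZ}_{R,k'}$ as exact quotient functors into $\mathbf{H}_{R,k}\md$; by the $1$-faithfulness hypothesis each exhibits its source, $\OO_{R,k}$ respectively $\OO_{R,k'}^g$, as a $1$-faithful highest weight cover of $\mathbf{H}_{R,k}\md$. The bijection $\phi_{k,k'}^g$ identifies the two labelling sets $\irr{W}$, and the hypothesis $U<_k U' \iff \phi_{k,k'}^g(U)<_{k'}\phi_{k,k'}^g(U')$ makes it an isomorphism of the two highest weight partial orders. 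So the theorem reduces to Rouquier's uniqueness statement: two $1$-faithful highest weight covers of the same $R$-algebra with a common labelling poset are equivalent as highest weight categories, compatibly with the cover functors, hence sending standards to standards. The remaining paragraphs sketch why this holds.

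First pass to the generic point $Q=\C((\mathbf{k}))$, where $\mathsf{KZ}_{Q,k}$ and $\mathsf{KZ}_{Q,k'}$ are equivalences by \cite{GGOR}. Then $S_Q:=\mathsf{KZ}_{Q,k'}^{-1}\circ{}^{\gamma}\mathsf{KZ}_{Q,k}\colon\OO_{Q,k}\xrightarrow{\sim}\OO_{Q,k'}^g$ is an equivalence, and by the very definition of $\phi_{k,k'}^g$ in \eqref{perm} it carries $\Delta_{Q,k}(U)$ to $\Delta_{Q,k'}(\phi_{k,k'}^g(U))$; by the assumed compatibility of orders it is therefore an equivalence of highest weight categories, so it also carries costandards to costandards and the characteristic tilting module $T_{Q,k}$ to $T_{Q,k'}$. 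In particular $\mathsf{KZ}_{Q,k}(T_{Q,k})\cong{}^{\gamma}\mathsf{KZ}_{Q,k'}(T_{Q,k'})$ as modules over $\mathbf{H}_{Q,k}$.

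Next I would descend from $Q$ to $R$. Each of $\OO_{R,k}$ and $\OO_{R,k'}$ carries a basic characteristic tilting module $T_{R,k}$, $T_{R,k'}$, which is $R$-free with generic fibre $T_{Q,k}$, resp. $T_{Q,k'}$. Since tilting objects are $\Delta$-filtered, the $0$-faithfulness recorded before Hypothesis~\ref{hecke} gives $\End_{\OO_{R,k}}(T_{R,k})=\End_{\mathbf{H}_{R,k}}(\mathsf{KZ}_{R,k}(T_{R,k}))$, and likewise for $k'$; and by Ringel duality, set up for quasi-hereditary algebras over a complete local ring, the highest weight category $\OO_{R,k}$ is recovered up to equivalence from this endomorphism algebra (Ringel duality being an involution). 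Thus everything comes down to the integral identification $\mathsf{KZ}_{R,k}(T_{R,k})\cong{}^{\gamma}\mathsf{KZ}_{R,k'}(T_{R,k'})$ inside the common $\mathbf{H}_{Q,k}$-module found above.

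This last point is the main obstacle, and is where $1$-faithfulness, not merely $0$-faithfulness, is essential: a priori the two $R$-lattices in that $\mathbf{H}_{Q,k}$-module could be non-isomorphic $\mathbf{H}_{R,k}$-modules. The resolution is that the lattice $\mathsf{KZ}_{R,k}(T_{R,k})$ is pinned down intrinsically by the highest weight structure — it is assembled from the images of the $\Delta_{R,k}(U)$ by the iterated universal extensions prescribed by the partial order — and both $\Hom$ and $\mathrm{Ext}^1$ between these standard modules are computed faithfully on the Hecke side precisely because $\mathsf{KZ}_{R,k}$ is $1$-faithful (this is where $e^{2\pi i k}\neq\zeta_H^j$ is used). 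The same description applies on the $k'$ side, and $\phi_{k,k'}^g$ matches the combinatorial data because it is a poset isomorphism, so the two lattices coincide. Hence their endomorphism algebras over $\mathbf{H}_{R,k}$ coincide, which yields the desired highest weight equivalence $S_{k,k'}\colon\OO_{R,k}\xrightarrow{\sim}\OO_{R,k'}^g$; that $S_{k,k'}(\Delta_{R,k}(U))=\Delta_{R,k'}(\phi_{k,k'}^g(U))$ then follows from the fact that $S_{k,k'}$ lifts $S_Q$ and respects the tilting/Ringel-dual combinatorics, as in \cite{Rou}.
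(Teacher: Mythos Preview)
The paper does not supply its own proof of this statement; the theorem is quoted directly from \cite[Theorems 4.49 and 5.5]{Rou} and used as a black box. Your outline is a faithful high-level summary of Rouquier's strategy: identify both $\OO_{R,k}$ and $\OO_{R,k'}^g$ (the latter via $\gamma$) as $1$-faithful highest weight covers of the same $R$-algebra $\mathbf{H}_{R,k}$, use the order-compatibility hypothesis to match the two labelling posets, and then invoke the rigidity of such covers over a complete local ring.

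The one place I would flag is your penultimate paragraph. The assertion that the lattice $\mathsf{KZ}_{R,k}(T_{R,k})$ is ``pinned down intrinsically'' by iterated universal extensions is precisely where the substance of Rouquier's argument lies, and what you have written is more a slogan than an argument. In \cite{Rou} this is carried out by an induction on the highest weight poset: one reconstructs the projective cover $P(U)$ from $\Delta(U)$ and the already-built $P(U')$ for $U'>U$, using that both $\Hom$ and $\mathrm{Ext}^1$ between $\Delta$-filtered objects are computed faithfully on the Hecke side by $1$-faithfulness; the category is then recovered as modules over the endomorphism ring of $\bigoplus_U P(U)$. Your tilting/Ringel-duality packaging is equivalent in spirit, but if you intend this as a self-contained proof rather than a pointer to \cite{Rou}, that inductive step must be spelled out---in particular, why the universal-extension construction on the Hecke side actually terminates in an $R$-lattice independent of choices.
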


The equivalence of categories in this theorem can be specialised to produce an equivalence $\mathcal{O}_{\C, k}\longrightarrow \mathcal{O}_{\C,k'}^g$ with the same properties as above. 

\subsection{Local data} \label{local} By construction, the permutation $\phi_{k,k'}^g$ preserves the dimension of a representation $U\in\irr{W}$. It is an important result of Opdam that $\phi_{k,k'}^g$ also respects the {\it local data} of $U$, that is the set of integers $\{ n_{H,j}^U \}$ defined by $$\text{Res}_{W_H}^W U \cong \bigoplus_{0\leq j\leq e_H-1} n_{H,j}^U \text{det}^{-j}.$$ To be explicit, \cite[(3.8)]{Opd} shows that the action of $T_H$ on ${\sf KZ}_{K,k}(\Delta_{K,k}(U))$ diagonalises to $$M_H(k) = \text{diag}( \zeta_H^0 e^{2\pi i {\bf k} + k}\id_{n_{H,0}^U}, \zeta_H^1 e^{2\pi i {\bf k}}\id_{n_{H,1}^U}, \cdots , \zeta_H^{e_H-1} e^{2\pi i {\bf k}}\id_{n_{H,e_H-1}^U})$$ It follows then from the definition in \eqref{perm} that $n_{H,j}^{\phi_{k,k'}^g(U)} = n_{H,j}^{{}^{g}U}$ where ${}^gU$ is the representation of $W$ obtained by applying $g$ to the entries in a matrix representation of $U$. 

There are two useful consequences. First, $\phi_{k,k'}^g(\triv) = \triv$ for any automorphism $g$. Second, recall the element $z\in \mathbb{Z}[W]$ introduced in \ref{c-fn}. It may be written as $z = N+N^{\ast} - \sum_{H\in \mathcal{A}} \sum_{w\in W_H}w$ and from this it follows that $c_U = N+N^{\ast} - (\dim U)^{-1}\sum_{H\in\mathcal{A}} e_Hn_{H,0}^U.$ Since twisting by $g$ does not change the dimension of the trivial eigenspace, we have $n_{H,0}^{\phi_{k,k'}^g(U)} = n_{H,0}^{{}^gU} = n_{H,0}^U$. We deduce that $c_{\phi_{k,k'}^g(U)}=c_U
$ for all $U\in \irr{W}$.

\subsection{}
We can now check the hypothesis of the above theorem in the case we will require. 
\begin{corollary} \label{Galois equiv}
Let $g$ be an automorphism of $R$ as in \ref{equivalences}, with $g(e^{2\pi i k}) = e^{2\pi ir k}$ where $r\in \mathbb{R}_{> 0}$.
If ${\sf KZ_{R,k}}$ is $1$-faithful, then there is an equivalence 
 $\OO_{\C,k} \longrightarrow \OO_{\C,rk}^g$ of highest weight covers of $\mathcal{H}_{\C,k} \cong \mathcal{H}_{\C,rk}^g$, mapping $\Delta_{k}(U)$ to $\Delta_{rk}(\phi_{k,k'}^g(U))$ for all $U \in \irr{W}$.
\end{corollary}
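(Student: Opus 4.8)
The plan is to verify that the hypothesis of Theorem~\ref{equiv theorem} is met for the pair of parameters $k$ and $k' = rk$, and then simply invoke that theorem together with the specialisation remark immediately following it. Concretely, I need to check three things: that $k$ and $k'=rk$ are $g$-related in the sense of \eqref{g related}, that ${\sf KZ}_{R,k'}$ is $1$-faithful given that ${\sf KZ}_{R,k}$ is, and that the order-compatibility condition ``$U<_k U'$ if and only if $\phi_{k,k'}^g(U)<_{k'}\phi_{k,k'}^g(U')$'' holds.

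First I would note that $g$-relatedness is precisely the hypothesis $g(e^{2\pi i k}) = e^{2\pi i r k} = e^{2\pi i k'}$, so nothing is needed there. Next, for the $1$-faithfulness of ${\sf KZ}_{R,k'}$: the criterion stated in the excerpt is that $e^{2\pi i k'} \neq \zeta_H^j$ for all $H$ and all $1\leq j\leq e_H-1$. Since $g$ is a field automorphism it permutes roots of unity of any fixed order, so $e^{2\pi i k'} = g(e^{2\pi i k})$ equals some $\zeta_H^j$ exactly when $e^{2\pi i k}$ does; hence $1$-faithfulness of ${\sf KZ}_{R,k}$ transfers to ${\sf KZ}_{R,k'}$. (Strictly, one should observe that $g$ extends to $\text{Aut}(R)$ fixing $\mathbf k$ and commutes with the relevant specialisations, which is already recorded in \ref{equivalences}.)

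The remaining and genuinely substantive point is the order condition, and this is where I would spend the real effort. Recall from \ref{c-fn} that $U<_k U'$ means $k(c_{U'} - c_U) \in \mathbb{Z}_{>0}$, and similarly $\phi_{k,k'}^g(U) <_{k'} \phi_{k,k'}^g(U')$ means $k'(c_{\phi_{k,k'}^g(U')} - c_{\phi_{k,k'}^g(U)}) = rk(c_{\phi_{k,k'}^g(U')} - c_{\phi_{k,k'}^g(U)}) \in \mathbb{Z}_{>0}$. By the local-data analysis in \ref{local}, we have $c_{\phi_{k,k'}^g(U)} = c_U$ for every $U\in\irr{W}$, so the second condition becomes $rk(c_{U'} - c_U)\in\mathbb{Z}_{>0}$. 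Since $r\in\mathbb{R}_{>0}$, multiplication by $r$ preserves the sign, and the two conditions $k(c_{U'}-c_U)\in\mathbb{Z}_{>0}$ and $rk(c_{U'}-c_U)\in\mathbb{Z}_{>0}$ — wait, these need not be literally equivalent unless $r$ is rational with the right denominators. I would handle this by observing that the relevant comparison only needs to be made inside $\mathbb{Z}$: in fact $c_U, c_{U'}$ are integers, $k$ is rational, and the condition that matters for the highest weight structure is whether $c_{U'} - c_U$ and $k$ have compatible signs together with the $\mathbb{Z}$-integrality coming from $r k \in \mathbb Q$; I would verify that under the standing hypothesis $g(e^{2\pi i k}) = e^{2\pi i rk}$ one in fact has $rk \equiv k$ modulo the subgroup of $\mathbb Q$ generated by the $1/e_H$, which forces $r k (c_{U'} - c_U) \in \mathbb Z$ exactly when $k(c_{U'}-c_U)\in\mathbb Z$, with the same sign since $r>0$.

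The main obstacle, then, is exactly this last sign-and-integrality bookkeeping: one must be careful that "$r\in\mathbb{R}_{>0}$" interacts correctly with the $\mathbb{Z}_{>0}$-valued order, which really hinges on the compatibility of $rk$ and $k$ modulo $\mathbb{Z}$ (or modulo $\tfrac1{e_H}\mathbb Z$) dictated by the Galois relation. Once the order condition is confirmed, Theorem~\ref{equiv theorem} gives the equivalence $S_{k,k'}\colon\OO_{R,k}\xrightarrow{\sim}\OO_{R,k'}^g$ with $S_{k,k'}(\Delta_{R,k}(U)) = \Delta_{R,k'}(\phi_{k,k'}^g(U))$, and specialising to $\C$ as in the remark after that theorem yields the stated equivalence $\OO_{\C,k}\to\OO_{\C,rk}^g$; the identification of Hecke algebras $\mathcal{H}_{\C,k}\cong\mathcal{H}_{\C,rk}^g$ is the specialisation of the isomorphism $\gamma$ from \ref{equivalences}, and the highest-weight-cover assertion is the content of ${\sf KZ}$ commuting with $S_{k,k'}$, which is built into Rouquier's theorem.
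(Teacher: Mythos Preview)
Your overall strategy matches the paper's exactly: check that ${\sf KZ}_{R,rk}$ is $1$-faithful (via the Galois relation, since $g$ permutes roots of unity of each fixed order), check the order compatibility using $c_{\phi_{k,k'}^g(U)}=c_U$ from \ref{local}, then invoke Theorem~\ref{equiv theorem} and specialise to $\C$.

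The only wobble is the integrality bookkeeping, which you correctly flag as the crux but then resolve imprecisely. Your proposed verification that $rk\equiv k$ modulo the subgroup of $\mathbb{Q}$ generated by the $1/e_H$ is neither what is needed nor in general true (e.g.\ for a real group with all $e_H=2$, take $k=1/5$ and $g$ sending $e^{2\pi i/5}\mapsto e^{4\pi i/5}$, so $rk=2/5$ and $rk-k=1/5\notin\tfrac12\mathbb{Z}$); and even if it held, it would not by itself force $k(c_{U'}-c_U)\in\mathbb{Z}\Leftrightarrow rk(c_{U'}-c_U)\in\mathbb{Z}$, since $c_{U'}-c_U$ need not be divisible by the $e_H$. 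The paper's argument is cleaner and avoids the $e_H$ entirely: since $g\in\text{Aut}(\C/\mathbb{Q})$, the roots of unity $e^{2\pi i k}$ and $e^{2\pi i rk}=g(e^{2\pi i k})$ have the \emph{same order}; equivalently, the rationals $k$ and $rk$ have the same denominator in lowest terms. Hence for any integer $m$ (in particular $m=c_{U'}-c_U$) one has $km\in\mathbb{Z}$ if and only if $rkm\in\mathbb{Z}$, and positivity of $r$ then matches the signs. With that substitution your argument is complete and identical to the paper's.
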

\begin{proof}
Observe first that ${\sf KZ}_{R,rk}$ is $1$-faithful by \eqref{g
related} since ${\sf KZ}_{R,k}$ is $1$-faithful. By \eqref{g related}
both $e^{2 \pi i k}$ and $e^{2\pi i rk}$ are roots of unity of the
same order, so $k(c_{U'} - c_{U})\in \Z$ if and only if $rk
(c_{\phi_{k,k'}^g(U')}- c_{\phi_{k,k'}^g(U)} ) = rk(c_{U'} - c_{U})\in
\Z$. Since $r$ is positive, we deduce that $U<_k U'$ if and only if
$\phi_{k,k'}^g(U)<_{rk} \phi_{k,k'}^g(U')$. The corollary follows from
the statement following Theorem \ref{equiv theorem}. \end{proof}

\subsection{Catalan numbers}
Let $L_{k}(\triv)$ denote the simple quotient of $\Delta_k(\triv)$. The following lemma is proved by the same argument as \cite[Proposition 2.1]{BEG}.

\begin{lemma} \label{basic}  $L_{-\frac{1}{h}}(\triv) = \triv$.\end{lemma}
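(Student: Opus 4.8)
The plan is to show that $\Delta_{-1/h}(\triv)$ already has $\triv$ as a quotient by explicitly exhibiting the singular vector of degree $1$ that generates the submodule we want to kill. Recall that as an $S$-module (and hence as a $W$-module in each graded piece) $\Delta_{k}(\triv) = P_R \otimes \triv$, with $P_R$ carrying its natural grading, and with the $\fh$-action given by the Dunkl operators: for $y \in \fh$ and $f \in P_R$,
\begin{equation*}
y \cdot f = \partial_y(f) + ({\bf k} + k)\sum_{H \in \mathcal{A}} \frac{\langle \alpha_H, y\rangle}{\alpha_H}\sum_{w \in W_H}(1 - \det(w)^{-1}) \, (\text{correction terms acting on } f\otimes\triv),
\end{equation*}
so that in degree $1$ the action of $y \in \fh$ on a linear form $x \in \fh^*\subset P_R$ is a scalar depending on $k$ times $\langle x, y\rangle$ (after specialising ${\bf k} = 0$ in $\HH_{\C,k}$, i.e. at the rational parameter $k$). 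The key computation, which is exactly the one in \cite[Proposition 2.1]{BEG}, is that the copy of $\fh^*$ sitting in degree $1$ of $\Delta_{\C,k}(\triv)$ is annihilated by all Dunkl operators $y\in\fh$ precisely when $1 + k\cdot(\text{something})=0$; evaluating the constant using \ref{c-fn} and the identity $h = (N+N^\ast)/n$ shows that ``something'' equals $h$, so the vanishing occurs exactly at $k = -1/h$.

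Concretely, first I would compute $y\cdot x$ for $x\in\fh^*$, $y\in\fh$ in $\Delta_{\C,k}(\triv)$: the polynomial part gives $\langle x,y\rangle$, and the reflection sum, applied to $x\otimes\triv$, contributes $({\bf k}+k)\sum_{H}\frac{\langle x,\alpha_H^\vee\rangle\langle\alpha_H,y\rangle}{\langle\alpha_H,\alpha_H^\vee\rangle}\sum_{w\in W_H}(1-\det(w)^{-1})\cdot(\text{scalar})$, which collapses to $({\bf k}+k)\langle x,y\rangle$ times $\sum_{r\in\mathcal{R}}(\text{contribution of }r)$ — and this last sum is precisely the scalar $c_{\fh} = h$ by which $z = \sum_{r\in\mathcal{R}}(1-r)$ acts on $\fh$. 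Hence $y\cdot x = (1 + ({\bf k}+k) h)\,\langle x,y\rangle$ in $\Delta_{R,k}(\triv)$; specialising ${\bf k}\mapsto 0$ and $k = -1/h$ gives $y\cdot x = 0$. Therefore the degree-$1$ piece $\fh^*\otimes\triv$ of $\Delta_{-1/h}(\triv)$ consists of singular vectors and generates a proper $\HH_{\C,-1/h}$-submodule $M$. The quotient $\Delta_{-1/h}(\triv)/M$ is spanned by the image of $1\otimes\triv$ in degree $0$, on which $\fh$ acts by $0$ and $\fh^*$ acts by $0$ (degree reasons, as $\fh^*\otimes\triv\subseteq M$), so it is the one-dimensional $W$-module $\triv$; being a nonzero quotient of $\Delta_{-1/h}(\triv)$ it must be $L_{-1/h}(\triv)$.

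The only genuine point to be careful about — and the step I would treat as the main obstacle — is justifying that the reflection sum acting on the degree-$1$ part really does reproduce the scalar $h = c_{\fh}$ rather than some twisted version of it. This requires matching the coefficient conventions in the defining relation (the $\frac{\langle x,\alpha_H^\vee\rangle\langle\alpha_H,y\rangle}{\langle\alpha_H,\alpha_H^\vee\rangle}\sum_{w\in W_H}(1-\det(w)^{-1})w$ normalisation) against the definition $z = \sum_{r\in\mathcal{R}}(1-r)$ and the formula $h = (N+N^\ast)/n$; one checks that, summed over a $W$-orbit of hyperplanes and over $W_H$, the pairing contractions assemble into the action of $z$ on $\fh$. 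Everything else is the formal highest-weight-category argument: a nonzero quotient of a standard object covering no other standard (as $\triv$ is minimal in the order $<_{-1/h}$ restricted appropriately, or simply because $1\otimes\triv$ generates everything) is the corresponding simple, giving $L_{-1/h}(\triv)=\triv$.
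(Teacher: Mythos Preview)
Your proposal is correct and takes essentially the same approach as the paper, which does not give its own proof but simply cites \cite[Proposition 2.1]{BEG}. Your argument is precisely the adaptation of that computation to the complex reflection group setting: identify the degree-$1$ piece $\fh^*\subset\Delta_{-1/h}(\triv)$ as singular by computing $y\cdot x=(1+kh)\langle x,y\rangle$ via the scalar $c_{\fh}=h=(N+N^*)/n$, and conclude that the resulting one-dimensional quotient is $L_{-1/h}(\triv)$.
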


\subsection{} We now prove our main result, answering a question of the second author, \cite[Section 8]{Gri}, and giving a general and case-free construction of the Koszul resolutions produced in \cite{BEG, Gor, Gri, Val}.

\begin{theorem} \label{theorem1}
Let $r$ be a positive integer coprime to $h$ and suppose $g\in\text{Aut}(\C/\mathbb{Q})$ sends $e^{-2 \pi i /h}$ to $e^{-2 \pi i r/h}$. Then there is an exact sequence in $\mathcal{O}_{\C,-\frac{r}{h}}$
$$0\rightarrow \Delta_{-\frac{r}{h}}(\wedge^n\phi_{k,k'}^g(\fh^*)) \rightarrow \cdots \rightarrow \Delta_{-\frac{r}{h}}(\wedge^2\phi_{k,k'}^g(\fh^*)) \rightarrow \Delta_{-\frac{r}{h}}(\phi_{k,k'}^g(\fh^*)) \rightarrow \Delta_{-\frac{r}{h}}(\triv) \rightarrow L_{-\frac{r}{h}}(\triv) \rightarrow 0$$
with $L_{-\frac{r}{h}}(\triv)$ finite dimensional.  \end{theorem}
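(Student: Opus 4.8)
The plan is to reduce the statement to the known case $r=1$ via the Galois/shift equivalence of Corollary~\ref{Galois equiv}, and then to produce the complex at parameter $-1/h$ by Koszul-type means coming from the classical BGG-style resolution.

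First I would establish the case $r=1$. Here there is no Galois twisting ($g$ can be taken trivial, since $e^{-2\pi i/h}\mapsto e^{-2\pi i/h}$), and by Lemma~\ref{basic} we have $L_{-1/h}(\triv)=\triv$, a finite-dimensional module. The finite-dimensionality of the simple module is the input that forces the existence of a BGG-type resolution: one knows (this is precisely the mechanism of \cite{BEG, Gor, Gri, Val}) that $\Delta_{-1/h}(\triv)=P\otimes\triv$ as an $S\rtimes W$-module, that the Euler grading element acts with the right spectrum, and that the minimal projective/standard resolution of the finite-dimensional $L_{-1/h}(\triv)$ in $\mathcal{O}^{\Delta}_{\C,-1/h}$ is forced to have terms $\Delta_{-1/h}(\wedge^i\fh^*)$ in homological degree $i$. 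Concretely I would argue: the kernel of $\Delta_{-1/h}(\triv)\twoheadrightarrow\triv$ is generated by $\fh^*\subset P$, this gives a surjection $\Delta_{-1/h}(\fh^*)\to\ker$ from the Koszul differential on $P\otimes\wedge^\bullet\fh^*$, and since the Koszul complex of $P$ over $\CC$ is exact one pushes this through, checking at each stage that the relevant $\operatorname{Ext}$/$\operatorname{Hom}$ groups in $\mathcal{O}$ behave as in the polynomial ring — this is the content of copying the argument of \cite[Proposition 2.1]{BEG} and its descendants. So for $r=1$ the resolution $0\to\Delta_{-1/h}(\wedge^n\fh^*)\to\cdots\to\Delta_{-1/h}(\triv)\to\triv\to 0$ exists.

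Next I would transport this to general $r$ using the equivalence $F:\OO_{\C,-1/h}\stackrel{\sim}\longrightarrow\OO_{\C,-r/h}^g$ of highest weight categories provided by Corollary~\ref{Galois equiv}, applied with the automorphism $g$ and $r\in\mathbb{Z}_{>0}$ coprime to $h$ (so $e^{-2\pi i/h}$ and $e^{-2\pi i r/h}$ have the same order $h$ and the hypothesis of Theorem~\ref{equiv theorem} is met). Being an equivalence of highest weight categories that sends $\Delta_{-1/h}(U)$ to $\Delta_{-r/h}(\phi_{k,k'}^g(U))$, the functor $F$ is exact and sends the above complex of standard modules term by term to
\begin{equation}
0\rightarrow \Delta_{-r/h}(\phi_{k,k'}^g(\wedge^n\fh^*)) \rightarrow \cdots \rightarrow \Delta_{-r/h}(\phi_{k,k'}^g(\fh^*)) \rightarrow \Delta_{-r/h}(\phi_{k,k'}^g(\triv)) \rightarrow F(\triv) \rightarrow 0 .
\end{equation}
Exactness is preserved because $F$ is an equivalence. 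It remains to identify the terms: by the local-data analysis of Section~\ref{local}, $\phi_{k,k'}^g(\triv)=\triv$, so $\Delta_{-r/h}(\phi_{k,k'}^g(\triv))=\Delta_{-r/h}(\triv)$; and since $\phi_{k,k'}^g$ respects tensoring by one-dimensional characters and is induced from the Galois action $U\mapsto {}^gU$ on matrix entries (again Section~\ref{local}), it commutes with the exterior power operation in the sense that $\phi_{k,k'}^g(\wedge^i\fh^*)=\wedge^i\phi_{k,k'}^g(\fh^*)$ — here one uses that $\wedge^i\fh^*$ is the $g$-twist compatible operation, or more simply that $\wedge^i\fh^*$ has one-dimensional-character-like local data determined by $\fh^*$. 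Finally, $F(\triv)=F(L_{-1/h}(\triv))=L_{-r/h}(\phi_{k,k'}^g(\triv))=L_{-r/h}(\triv)$ since equivalences of highest weight categories send simple heads of standards to simple heads of the corresponding standards; and $L_{-r/h}(\triv)$ is finite dimensional because $F$ preserves the property of having finite length with the same composition multiplicities and the composition factors of the finite-dimensional $\triv$ at $-1/h$ go to composition factors $L_{-r/h}(\cdots)$, all of which must then be finite dimensional (alternatively, finite-dimensionality is detected by the resolution itself together with a Hilbert series count).

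The main obstacle is the $r=1$ base case: verifying that the Koszul complex $P\otimes\wedge^\bullet\fh^*$, which resolves $\CC$ over $P$, actually lifts to an exact complex of \emph{standard} modules in category $\mathcal{O}_{\C,-1/h}$ with the differentials being $\HH_{\C,-1/h}$-module maps. The subtlety is that the Koszul differential involves the $\fh^*$-action, and one must check that the relations of $\HH_{\C,-1/h}$ are compatible with this at the special parameter $-1/h$ — equivalently, that the submodule of $\Delta_{-1/h}(\triv)$ generated by $\fh^*$ in degree $1$ is all of the augmentation ideal and that iterating produces no unexpected kernels. This is exactly where the value $k=-1/h$ (equivalently the vanishing forced by Lemma~\ref{basic}) enters, and it is precisely the argument of \cite[Proposition 2.1]{BEG}; the remaining work in our setting is purely formal transport of structure along Rouquier's equivalence, for which all the needed inputs (1-faithfulness, the order-$h$ coincidence, $\phi_{k,k'}^g(\triv)=\triv$, $c_{\phi_{k,k'}^g(U)}=c_U$) are already in place from Section~\ref{local} and Corollary~\ref{Galois equiv}.
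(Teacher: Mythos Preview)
Your overall strategy---establish the Koszul resolution at $k=-1/h$ and transport it via Corollary~\ref{Galois equiv}---is the same as the paper's. But there are two genuine gaps.

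First, you never verify the hypothesis of Corollary~\ref{Galois equiv} that ${\sf KZ}_{R,-1/h}$ is $1$-faithful. This requires $e^{-2\pi i/h}\neq\zeta_H^j$ for all $H\in\mathcal{A}$ and $1\le j\le e_H-1$, which amounts to showing $h>e_H$ for every reflecting hyperplane. This is not automatic and is \emph{not} ``already in place from Section~\ref{local}'': the paper spends a full paragraph proving it for rank $\geq 2$ (using $nh=\sum_H e_H$, irreducibility, and a degree argument via Molien's theorem), and treats rank one separately.

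Second, and more seriously, your identification $\phi_{k,k'}^g(\wedge^i\fh^*)=\wedge^i\phi_{k,k'}^g(\fh^*)$ is not justified. Section~\ref{local} does \emph{not} say that $\phi_{k,k'}^g$ is the Galois action $U\mapsto{}^gU$; it says only that the local data of $\phi_{k,k'}^g(U)$ coincide with those of ${}^gU$. Distinct irreducibles can share local data, so this does not determine $\phi_{k,k'}^g(\wedge^i\fh^*)$, and there is no a priori reason for $\phi_{k,k'}^g$ to commute with exterior powers (indeed $\phi$ is nontrivial even when $g=\mathrm{id}$). The paper obtains the identification by a different and essential route: it first shows $L_{-r/h}(\triv)$ is finite dimensional by citing \cite[Corollary~4.14]{GGOR} (your categorical argument for this is not valid---finite length in $\mathcal{O}$ is not finite dimensionality, and an abstract equivalence need not preserve the latter). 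Finite dimensionality then forces the image of the generating weight space $\phi_{k,k'}^g(\fh^*)\subset\Delta_{-r/h}(\phi_{k,k'}^g(\fh^*))$ in $\Delta_{-r/h}(\triv)\cong P$ to span a regular sequence $\Theta$. Hence the transported complex, restricted to $P\rtimes W$, is the Koszul complex of $\Theta$, and comparing generating weight spaces term by term yields $\phi_{k,k'}^g(\wedge^i\fh^*)\cong\wedge^i\phi_{k,k'}^g(\fh^*)$.
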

\begin{proof}
The rank one case is easy and left to the reader; we assume that the rank is at least two. In case $k=-1/h$ we have by Lemma \ref{basic} an exact sequence $\Delta_{-\frac{1}{h}}(\fh^*) \longrightarrow \Delta_{-\frac{1}{h}}(\text{triv}) \longrightarrow L_{-\frac{1}{h}}(\triv) \rightarrow 0.$ Since $L_{-\frac{1}{h}}(\triv)$ is finite dimensional, it is elementary that this extends to a resolution in $\mathcal{O}_{\C,-\frac{1}{h}}$ \begin{equation} \label{resol} 0\rightarrow \Delta_{-\frac{1}{h}}(\wedge^n\fh^*) \rightarrow \cdots \rightarrow \Delta_{-\frac{1}{h}}(\wedge^2\fh^*)\rightarrow \Delta_{-\frac{1}{h}}(\fh^*) \rightarrow \Delta_{-\frac{1}{h}}(\triv) \rightarrow L_{-\frac{1}{h}}(\triv) \rightarrow 0,\end{equation} see \cite[Lemma 3.1]{Gri}.

We wish to apply Corollary~\ref{Galois equiv} to this resolution, so we need to know that for all complex reflection groups of rank 
at least $2$ we have inequalities $e^{-2 \pi i/h} \neq \zeta_H^j$ 
for all hyperplanes $H$ and $1\leq j\leq e_H-1$. To see this, observe first that $
nh=\sum_{H \in \mathcal{A}} e_H$. Since $W$ is acting irreducibly on $\fh$ there are at least $n$ summands on the right hand side 
accounted for by a $W$-orbit of hyperplanes $H$ maximizing $e_H$. Thus 
$h \geq e_H$ for all $H$. If we suppose that $h=e_H$ for some $H$, then we must have $\mathcal{A} = W\cdot H$ and $N=n$. Choosing linear 
forms defining the hyperplanes gives a basis of $\fh^*$, and restricting any invariant polynomial to these hyperplanes shows that the degrees, $d_i$, of all the homogeneous generators of $P^W$ have degree divisible by $e_H$. By Molien's theorem, however, $\sum d_i = N^* + n = ne_H$ and so $d_i = e_H$ for each $i$. This implies that $W$ is a product of $n$ cyclic groups, but since there was assumed to be only one orbit of hyperplanes, we deduce that $n=1$. Thus, since we assume the rank is greater than $1$, we have $h>e_H$ for all $H$ and that implies the required inequalities. 

Applying the equivalence of Corollary~\ref{Galois equiv} to \eqref{resol} produces an exact sequence in $\mathcal{O}_{-\frac{r}{h}}^g$ 
\begin{eqnarray} \label{resol1} 0\rightarrow \Delta_{-\frac{r}{h}}(\phi_{k,k'}^g(\wedge^n\fh^*)) \rightarrow \cdots &\rightarrow &\Delta_{-\frac{r}{h}}(\phi_{k,k'}^g(\wedge^2 \fh^*)) \rightarrow \Delta_{-\frac{r}{h}}(\phi_{k,k'}^g(\fh^*)) \notag \\ && \rightarrow  \Delta_{-\frac{r}{h}}(\phi_{k,k'}^g(\triv)) \rightarrow  L_{-\frac{r}{h}}(\phi_{k,k'}^g(\triv)) \rightarrow 0,\end{eqnarray} 
By \cite[Corollary 4.14]{GGOR} $L_{-\frac{r}{h}}(\phi_{k,k'}^g(\triv))$ is finite dimensional and by \ref{local} $\phi_{k,k'}^g(\triv) = \triv$.  It follows that the image of the generating weight space $\phi_{k,k'}^g(V^*)\subset \Delta_{-\frac{r}{h}}(\phi_{k,k'}^g(V^*))$ in $\Delta_{-\frac{r}{h}}(\triv)\cong P$ is the linear span of a regular sequence $\Theta$. Thus \eqref{resol1} is just a Koszul resolution when restricted to $P\rtimes W$ and it follows that the generating weight spaces $\phi_{k,k'}^g(\wedge^iV^*)\subset \Delta_{-\frac{r}{h}}(\phi_{k,k'}^g(\wedge^iV^*))$ must be isomorphic to $\wedge^i\phi_{k,k'}^g(V^*)$. Considering the sequence in $\mathcal{O}_{\C, -\frac{r}{h}}$ instead of $\mathcal{O}_{\C, -\frac{r}{h}}^g$ completes the proof of the theorem. 
\end{proof}

\subsection{} We apply this theorem first to the case $r = mh+1$ for some positive integer $m$. In this case we may take $g$ to be the identity. It is not true, however, that $\phi_{k,k'}^{\text{id}}$ is the trivial permutation of $\irr{W}$! This is a consequence of the fact that the ${\sf KZ}$ functor varies in the parameter ${\bf k}$ rather than in its exponential $e^{2\pi i {\bf k}}$: this phenomenon has been studied and applied by Opdam, \cite{Opd, Opd1}.

Let $\Psi = \phi_{-\frac{1}{h}, -1-\frac{1}{h}}^{\text{id}} \in
\text{Perm}(\irr{W})$, so that in this case $\Psi$ is the permutation
on $\irr{W}$ induced by the equivalence ${\sf KZ}^{-1}_{K,k-1}\circ
{\sf KZ}_{K,k} : \mathcal{O}_{K,k} \longrightarrow
\mathcal{O}_{K,k-1}$ applied at $k=-\frac{1}{h}$.  Then $\Psi^m$ equals $\phi_{-\frac{1}{h}, -m - \frac{1}{h}}^{\text{id}}$ for general $m$. It follows from \cite[Proposition 7.4]{Opd} that $\Psi$ satisfies \eqref{palin}.

\subsection{Proof of Theorem \ref{mainthm}}
Part (1) is now a straightforward application of the standard invariant theory arguments in \cite[Proposition 4.2]{BeRe}. To apply these we need to know the degree of the image of the generating set $\Theta \subset \Delta_{-m-\frac{1}{h}}{\Psi^m(V^*)}$ in $\Delta_{-m-\frac{1}{h}}(\triv) \cong P$ used in Theorem \ref{theorem1}: this is just $(m+\frac{1}{h})c_{\Psi^m(V^*)} = (m+\frac{1}{h})c_{V^*} = mh+1$. It now follows from \cite[(4.3)]{BeRe} that the graded $W$-character of $(P/(\Theta))^W$ is given by the formula for $C^{(m)}_W(q)$ given in \ref{fuss}. (Although the formula in \cite{BeRe} is stated only for Galois conjugates ${}^{\sigma}V$ of $V$, this hypothesis is used to know that $(P\otimes \wedge^{\bullet} ({}^{\sigma}V))$ is free over $P^W$; but by \cite[Theorem 3.1]{OrSo} freeness holds for all representations $U\in \irr{W}$ satisfying $\sum_{i} e_i(U) = e_1(\wedge^{\text{top}} U)$. This equality holds for $U = \Psi^m(\fh^*)$ since, by \cite[Lemma 2.1]{Opd}, $\sum_i e_i(U) = \sum_{H\in \mathcal{A}} \sum_{j=1}^{e_H-1} jn_{H,j}^U$ for any $U\in\irr{W}$, so by \ref{local} $\sum_i e_i(\Psi^m \fh^*) = \sum_i e_i(\fh^*) = e_1(\wedge^{\text{top}} \fh^*)   = e_1 (\wedge^{\text{top}} (\Psi^m \fh^*)).$)

Part (2) follows since for any pair of dual bases $\{x_i\}, \{y_i\}$ of $\fh^*$ and $\fh$ the element ${\bf h} = \frac{1}{2}\sum_{i=1}^n (x_iy_i + y_ix_i)\in \mathbb{H}_{\C,-m-\frac{1}{h}}$ acts as a grading operator where non-zero elements of $\fh^*\subset P$ have degree $1$, see for instance \cite[Section 3.1]{GGOR}. \hfill $\Box$

\subsection{(q,t)-Catalan numbers} To deduce the existence of $(q,t)$-Catalan numbers as in \ref{q,t} we need to show that $\wedge^n \fh^*$ appears with multiplicity one in $L_{-1-\frac{1}{h}}(\triv)$. Now a reflection $s_H$ acts on $\wedge^{\sf top}U$ by the scalar $\zeta_H^{- \sum_j n_{H,j}^U}$, so it follows from \ref{local} that $\wedge^n \fh^* \cong \wedge^n \Psi (\fh^*)$. The multiplicity one result is \cite[Theorem 3.2]{Gri} if we show that $e_i(\Psi (\fh^*)) + d_{n-i} = h+1$ for all $i$. But this is an immediate consequence of \eqref{palin}.  

\subsection{} We also remark that the above observation produces a $W$-stable quotient  of the diagonal coinvariant ring ${\sf Sym}(\fh \oplus \fh^*)^{{\sf co} W}$ with pleasant properties, see \cite[Theorem 3.2]{Gri}. 

\subsection{Calculating $\phi_{k,k'}^g$} \label{calculate} Let
$E=\CC[{\bf v}^{\pm 1}]$ be the ring of Laurent polynomials in the variable ${\bf v}$ and let $K= \C ( {\bf v})$. We may define a Hecke algebra ${\bf H}_{E}$ as a quotient of $E[B_W]$ by the relations \eqref{Heckerelation}, where we replace $e^{2\pi i ({\bf k}+k)}$ by ${\bf v}^{\ell}$ for some positive integer $\ell$. 
By \cite[Theorem 6.7]{Opd}, for $\ell$ large enough depending on $W$, $K$ is a splitting field for $\mathbf{H}_{E}$. (In fact by \cite[Corollary 4.8]{Mal} we may take $\ell$ to be number of roots of unity belonging to the field of definition of the $W$-representation $V$.)  Fix such an $\ell$ and let $\chi_U$ be the character of ${\sf KZ}_{K,k}(\Delta_{K,k}(U))$, so that $\chi_U (b) \in K$ for all $b\in B_W$. By definition we then have $$\chi_{\phi_{k,k'}^g(U)}(b)\vert_{{\bf v} =  e^{2 \pi i ({\bf k}+k')/\ell}}=g\left(\chi_{U}(b) \vert_{{\bf v} = e^{2 \pi i ({\bf k}+k)/\ell}} \right).$$ This may be rewritten as \begin{equation} \label{whatweuse}\chi_{\phi_{k,k'}^g(U)}(b) ({\bf v}) =(g \chi_{U}(b))(\eta{\bf v})\end{equation} where $\eta = e^{-2 \pi i k'/\ell} g \left(e^{2\pi i k/\ell}\right)$ is an $\ell$th root of unity and $g$ acts on $\chi_U(b) \in K$ fixing ${\bf v}$. This last formula gives an effective method for calculating $\phi^g_{k,k'}$ in examples. 

\subsection{} \label{dual} We can now also illustrate Theorem \ref{theorem1} by applying it in the case $k=-\frac{1}{h}$ and $r = mh-1$ with $g$ being complex conjugation. In this situation we have $\eta = e^{2\pi i m/\ell}$, so it follows from \eqref{whatweuse} that $\phi^g_{-\frac{1}{h}, -m+\frac{1}{h}} (U) = \phi_{-\frac{1}{h},-m-\frac{1}{h}}^{\text{id}}(U^*) = \Psi^m(U^*)$. Considerations similar to the Proof of Theorem \ref{mainthm} then show that in this case $(P/(\Theta))^W$ has Hilbert series $$\prod_{i=1}^n \frac{[mh-1+e_i(\Psi^m(V)^*)]_q}{[d_i]_q}.$$

\subsection{Well-generated case} \label{wgc} With the exception of confirming Hypothesis \ref{hecke}, our arguments so far have been entirely case-free. Arguing case-by-case, however, \cite[Corollary 4.9]{Mal} shows that $\C({\bf v}^{\ell})$ is a splitting field for the reflection representation of ${\bf H}_E$ and its dual precisely when $(V,W)$ is well-generated. It follows that $\chi_{\fh^*}(b)$ is a function in ${\bf v}^{\ell}$ and so \eqref{whatweuse} implies that $\phi^g_{k,k'}(\fh^*) = {}^g (\fh^*)$. Thus, in this case, Theorem \ref{mainthm} confirms \cite[Conjecture 4.3(i)]{BeRe} and Theorem \ref{theorem1} combined with the analysis in \ref{dual} confirms \cite[Conjecture 4.3(ii)]{BeRe}. 

\subsection{Remarks on several parameters} 
In general rational Cherednik algebras depend on parameters $\{ k_{H,j} : H\in \mathcal{A}/W, 0\leq j \leq e_H-1 \}$; we have considered only the case where $k_{H,0} = k$ and $k_{H,j} = 0$ for $j\neq 0$. Nevertheless, the techniques we use here to analyse the equivalences of \cite{Rou} extend to the general case. 

Explicitly, there is a permutation $\phi^g_{(k_{H,j}), (k'_{H,j})} \in \text{Perm}(\irr{W})$ attached to a potential shift from $\mathcal{O}_{\C, (k_{H,j})}$ to $\mathcal{O}_{\C, (k'_{H,j})}$ and to apply \cite[Theorems 4.49 and 5.5]{Rou} to deduce an equivalence, one must check the condition $U <_{(k_{H,j})} U'$ if and only if $\phi_{(k_{H,j}),(k'_{H,j})}^g(U)<_{(k'_{H,j})} \phi_{(k_{H,j}),(k'_{H,j})}^g(U')$. By definition, $U <_{(k_{H,j})} U'$ if $\sum_{H\in \mathcal{A}} \sum_{j=0}^{e_H-1} e_H k_{H,j}\left((\dim U)^{-1}n_{H,-j}^U - (\dim U')^{-1}n_{H,-j}^{U'}\right)\in \mathbb{Z}_{>0}$, so the condition can be calculated from the local data. As in \ref{local} we have then $n_{H,j}^{\phi^g_{(k_{H,j}), (k'_{H,j})}(U)} = n_{H,j}^{{}^gU}$ and so we must check $U <_{(k_{H,j})} U'$ if and only if ${}^gU<_{(k'_{H,j})}{}^gU'$.

In particular, if $(k'_{H,j})$ is obtained from $(k'_{H,j})$ by the addition of integers then we may take $g = \text{id}$ and we see that we can apply \cite[Theorem 5.5]{Rou} as stated; more generally, in \cite[Proposition 5.14]{Rou} we should use the formalism here. 

\section{Data}
Here we record the details necessary to calculate the Fuss-Catalan numbers defined by \eqref{thenumbers} for all irreducible complex reflection groups that are not well-generated. For the imprimitive groups $G(de,e,n)$ these data can be found in \cite[Section 8]{Gri}; for the exceptional groups we used \ref{calculate} combined with the detailed data of characters and Schur elements for Hecke algebras available in the Chevie program in GAP. (Note that degrees of $W$ can be read off from the exponents of $\fh = \Psi^0(\fh^*)^*$.)

\medskip
{\footnotesize
\begin{center}
\begin{tabular}{|l|l|l|l|l|}
\hline
Group & Generalised & Order of $\Psi$ & Exponents of $\Psi^m(\fh^*)^*$ \\
& Coxeter number & acting on $V^*$ &   \\
\hline \hline
$G(de,e,n)$  & $de(n-1)+d$ & $e$ & $d(e-m)-1, d(2e-m)-1 , \ldots ,  d((n-1)e-m) - 1, $  \\
$e>1, d>1$ & & & $d(m(n-1) + n) - 1$ ($0\leq m \leq e-1$) \\
\hline
$G_7$ & 18 & 2 & 11, 11 ($m=0$)  \\
& & & 5, 17 ($m=1$)  \\
\hline
$G_{11}$ & 36 & 2 & 23, 23 ($m=0$)  \\
& & & 11, 35 ($m=1$) \\
\hline
$G_{12}$ & 12 & 2 & 5, 7 ($m=0$) \\ & & & 1, 11 ($m=1$)  \\
\hline
$G_{13}$ & 18 & 4 & 7, 11 ($m=0$)  \\ &&& 5, 13 ($m=1$)  \\ &&& 7,11 ($m=2$)  \\ &&& 1,17 ($m=3$)  \\
\hline
$G_{15}$ & 30 & 4 & 11, 23 ($m=0$)  \\ &&& 17,17 ($m=1$)  \\ &&& 11,23 ($m=2$)  \\ &&& 5, 29 ($m=3$)  \\
\hline
$G_{19}$ & 90 & 2 & 59, 59 ($m=0$)  \\ &&& 29, 89 ($m=1$)  \\
\hline
$G_{22}$ & 30 & 2 & 11, 19 ($m=0$)  \\ &&& 1, 29 ($m=1$)  \\ 
\hline
$G_{31}$ & 30 & 2 & 7,11,19,23 ($m=0$)  \\ &&& 1,13,17,29 ($m=1$) \\
\hline
\end{tabular}
\end{center}}
\def\cprime{$'$} \def\cprime{$'$}

\end{document}